\newtheorem{theorem}{Theorem}[section]
\newtheorem{remark}[theorem]{Remark}
\numberwithin{equation}{section}
\def\x#1{(\ref{#1})}
\def\R{{\mathbb R}}
\def\be{\begin{equation}}
\def\ee{\end{equation}}
\def\ba{\begin{array}}
\def\ea{\end{array}}
\def\bea{\begin{eqnarray}}
\def\eea{\end{eqnarray}}
\def\beaa{\begin{eqnarray*}}
\def\eeaa{\end{eqnarray*}}
\def\hh{\!\!\!\!}
\def\EQ{\hh & = & \hh}
\def\nn{\nonumber}
\def\ifl{\iffalse}
\def\lb{\label}
\begin{document}

\title[]
{Constant vorticity geophysical waves with centripetal forces and at arbitrary latitude}

\author[Jifeng Chu\quad and \quad Yanjuan Yang]
{Jifeng Chu$^1$ \quad and\quad  Yanjuan Yang$^2$}
\address{$^1$Department of Mathematics, Shanghai Normal University, Shanghai 200234, China}
\address{$^2$Department of Mathematics, Hohai University, Nanjing 210098, China}
\email{jifengchu@126.com; jchu@shnu.edu.cn (J. Chu)}
\email{yjyang90@163.com (Y. Yang)}
\thanks{Jifeng Chu was supported by the National Natural Science Foundation of China (Grants No. 11671118 and No. 11871273).
Yanjuan Yang was supported by the Fundamental Research Funds for the Central Universities
(Grant No. 2017B715X14) and the Postgraduate Research and Practice Innovation Program of
Jiangsu Province (Grant No. KYCX17$_{-}$0508).}

\subjclass[2010]{Primary 35Q31, 35J60, 76B15.}

\keywords{Constant vorticity; Geophysical flows; Arbitrary latitude; Centripetal forces.}
\begin{abstract}
We consider three-dimensional geophysical flows at arbitrary latitude and with constant vorticity
beneath a wave train and above a flat bed in the $\beta$-plane approximation with centripetal
forces. We consider the $f$-plane approximation as well as the $\beta$-plane approximation.
For the $f$-plane approximation, we prove that there is no bounded solution. For the $\beta$-plane approximation,
we show that the flow is necessarily irrotational and the free surface is necessarily flat if
it exhibits a constant vorticity. Our results reveal some essential differences from those results
in the literature, due to the presence of centripetal forces. Moreover, for the case exhibiting the surface tension, we prove that there are no flows
exhibiting constant vorticity.
\end{abstract}

\maketitle

\section{Introduction}

In this paper, we focus on geophysical ocean waves in which both Coriolis and
centripetal effects of the Earth's rotation play a significant role. In recent
years, the mathematical analysis of geophysical flows \cite{cb, p} has attracted much attention
for their wide applications (see the references \cite{bm, c-jgr, c-jpo, c2014, cj-1, cj-2, cj-3, h-jmbf, im, ic, wh} for the flows in the equatorial region
and \cite{cdy-dcds, cdy} for the flows at arbitrary latitude). However, in most existed results,
centripetal forces are typically neglected because they are relatively much smaller than Coriolis forces.
Recently Henry in \cite{h-jfm} showed in a remarkable way that the relatively small-scale centripetal
force plays a central role in facilitating the admission of a wide range of constant underlying currents
in studying the exact solution for the equatorially nonlinear waves in the $\beta$-plane approximation
 and with centripetal forces.
Later, an explicit three-dimensional nonlinear solution for  geophysical waves propagating at arbitrary latitude in the $\beta$-plane approximation with centripetal forces was presented in \cite{cdy}.

Compared with large studies on equatorial water waves, the study on the non-equatorial
waves seems much fewer. Besides the work \cite{cdy} mentioned above, 
an extension of the exact solution \cite{hh-dcds} for equatorial waves in the $f$-plane approximation to the cases
at arbitrary latitude and in the presence of a constant underlying background current was presented in \cite{fgx}.
 A $\beta$-plane approximation at arbitrary latitude in the presence of an underlying current and a Gerstner-like solution to this
problem was very recently provided in \cite{cdy-dcds}.

Vorticity is adequate in describing the motion of both equatorial and non-equatorial flows.
The nonzero vorticity serves as a tool
for describing interactions of waves with non-uniform currents. From the history perspective,
the mathematical theory of rotational water waves was original started by Gerstner in the beginning of
the 19th century \cite{gf}, in which an explicit family of periodic travelling waves with non-zero vorticity
was constructed using Lagrangian coordinates. In recent works \cite{c-2011, ck, csv, cv, cn, ic, m-3, s}, the assumption of nonzero constant vorticity
has been assumed, which is the simplest rotational setting and corresponds to a uniform current. Although such an
assumption is for physical viewpoints (see the discussion in \cite{ligh}), the main consideration lies on more
convenient in mathematical analysis, for example, constant vorticity flows have
the advantage that their velocity field consists of harmonic functions (see the modern discussions in \cite{csv, cv}).
The importance of the vorticity in
the realistic modeling of ocean flows is highlighted in the very recent papers \cite{bm, cj-4, m-jmfm, m-ampa}.
See \cite{cew, cim, cs, j, m-jmfm, wah, wh} and the monograph \cite{c-book}for
more results on  rotational water waves.

Among the results on vorticity in the literature, a feature is to determine the
dimensionality of the flow. From the mathematical perspective, the study of the
two-dimensionality for the rotational flow was started by the work \cite{c-2011},
in which Constantin showed that a free surface water flow of constant nonzero
vorticity beneath a wave train and above a flat bed must be two-dimensional and
the vorticity must have only one nonzero component which points in the horizontal
direction orthogonal to the direction of wave propagation. After \cite{c-2011}, more
results along this line has been obtained in different settings. In the presence of
Coriolis forces, Martin in \cite{m-ampa} proved the two-dimensionality of the equatorial
flows in the $f$-plane approximation, and it was found that there is a striking difference
between the geophysical flows and the classical gravity flows, that is, the two-dimensionality
holds even if the vorticity vector vanishes due to the presence of Coriolis forces. Martin also
proved in \cite{m-JFM} and \cite{m-2019} that for the equatorial and non-equatorial flows in
the $\beta$-plane approximation, the only flow exhibiting a constant vorticity vector is the
stationary flow with vanishing velocity field and flat surface. Very recently, the authors
\cite{cy} obtained several results much different from \cite{m-JFM}, and we show that
the equatorial flow is necessarily irrotational, the free surface is necessarily flat, and
possess non-vanishing horizontal velocity field if it exhibits a constant vorticity, owing
to the presence of centripetal forces.

The aim of this paper is to show that, assuming that the non-equatorial flows admit
a constant vorticity vector,  the centripetal force can lead to a better outcome,
especially compared with the existed results without the centripetal term. Both $f$-plane
approximation and $\beta$-plane approximation are studied. We will extend the results
in \cite{cy} to the flows at arbitrary latitude. In particular, for the $f$-plane
approximation, we prove that there is no bounded solution, while for the $\beta$-plane approximation,
we show that the flow is necessarily irrotational and the free surface is necessarily flat if
it exhibits a constant vorticity. Moreover, for the case exhibiting the surface tension, we
prove that there are no flows exhibiting constant vorticity.

\section{Preliminary}

We recall the following governing equations derived by Constantin and Johnson in \cite{cj-2} for
geophysical fluid dynamics in the cylindrical coordinates
\begin{equation}\left\{ \begin{array}{ll}
u_t+uu_x+\frac{vu_\phi}{R+z}+wu_z+2\Omega(w\cos\phi-v\sin\phi) =-\frac{1}{\rho}P_x, \\
v_t+uv_x+\frac{vv_\phi}{R+z}+\frac{wv}{R+z}+2\Omega u\sin\phi+(R+z)\Omega^2\sin\phi\cos\phi =-\frac{1}{\rho}\frac{P_\phi}{R+z},\\
w_t+uw_x+\frac{vw_\phi}{R+z}+ww_z-\frac{v^2}{R+z}-2\Omega u\cos\phi-(R+z)\Omega^2\cos^2\phi=-\frac{1}{\rho}P_z-g,\end{array} \right.\nonumber\end{equation}
together with the equation of incompressibility
\be\lb{ein}u_x+\frac{1}{R+z}v_\phi+\frac{1}{R+z}\frac{\partial}{\partial z}[(R+z)w]=0.\nonumber\ee
Here the origin in the cylindrical coordinates is located at the centre of the Earth, $x$-axis with the positive $x$-direction going from west to east, $\phi$ is the angle of latitude and $z=r-R$ is the variation in the locally vertical direction of the radial variable from the Earth's surface,
 $(u,v,w)$ is the fluid velocity field, $P$ is the pressure,
$\rho$ is the water's density,  $t$ is the time, $g$
is the standard gravitational acceleration at the Earth's surface and  $\Omega=7.29\times 10^{-5}$ rad/s  is the rotational speed
of the Earth and $R=$6378 km is the radius of the Earth.

The Coriolis parameters, defined by:
\be f=2\Omega\sin\phi,\quad \hat{f}=2\Omega \cos\phi,\nonumber\ee
 depend on the variable latitude $\phi$.
At the Equator
$f=0$, $\hat{f}=2\Omega$.
For water waves propagating zonally in a relatively narrow
ocean strip less than a few degrees of latitude wide,
it is adequate to use the $f$- or $\beta$-plane approximations.
Within the $f$-plane approximation  the Coriolis parameters  are treated  as constants,
and in terms of the Cartesian coordinate system $(x,y,z)$, we obtain
the governing equations
\begin{equation}\lb{Euler-1-f}\left\{ \begin{array}{ll}
u_t+uu_x+vu_y+wu_z+\hat{f}w-fv=-\frac{1}{\rho}P_x, \\
v_t+uv_x+vv_y+wv_z +fu+\frac{\hat{f}^2}{4}y+\frac{\hat{f}f}{4}R=-\frac{1}{\rho}P_y,\\
w_t+uw_x+vw_y+ww_z-\hat{f} u-\frac{\hat{f}^2}{4}R=-\frac{1}{\rho}P_z-g.
\end{array} \right.\end{equation} Within the $\beta$-plane approximation, we consider that, at the fixed latitude $\phi$,
$\hat{f}$ is constant and  $f$ has  a linear variation with the latitude. Defining $y=R\alpha$ and retaining only terms of linear order  in the expansion of  $\sin(\phi+\alpha)$, this linear variation has the form $f+\beta y$,
 with
\begin{align} \beta=\frac{\hat{f}}{R}=\frac{2\Omega\cos\phi}{R}.\nonumber\end{align}
Thus we get the following $\beta$-plane  approximation   equations for geophysical
fluid dynamics with centripetal terms:
\begin{equation}\lb{Euler-1}\left\{ \begin{array}{ll}
u_t+uu_x+vu_y+wu_z+\hat{f}w-(f+\beta y)v=-\frac{1}{\rho}P_x, \\
v_t+uv_x+vv_y+wv_z +(f+\beta y)u+\frac{\hat{f}^2}{4}y+\frac{\hat{f}f}{4}R=-\frac{1}{\rho}P_y,\\
w_t+uw_x+vw_y+ww_z-\hat{f} u-\frac{\hat{f}^2}{4}R=-\frac{1}{\rho}P_z-g.
\end{array} \right.\end{equation}
In both cases, we have the condition of incompressibility
\be\lb{massc}u_x+v_y+w_z=0.\ee

We will consider regular wave trains of water waves propagating steadily in the direction of the horizontal $x$-axis, $L$-periodic in the variable $x$, and presents no variation in the $y$-direction. The fluid domain
is bounded below by the impermeable flat bed $z=-d$, and above by the free surface $z=\eta(x-ct)$, where $\eta$ gives the wave profile with
the zero mean $\int_{0}^{L}\eta(s)ds=0$ and $c>0$ is the wave speed.
We assume that the wave crest is located at $x=0$, and thus obviously we know $\eta(0)>0$.

Complementing the equations of motion are the boundary conditions, of which
\be\lb{kbc-3}P=P_{atm}\quad {\rm on}\quad z=\eta(x-ct),\ee
with $P_{atm}$ being the constant atmospheric pressure, decouples the motion
of the water from that of the air.  In addition to \x{kbc-3}, we
have the kinematic boundary conditions
\be\lb{kbc-1}w=(u-c)\eta_x\quad {\rm on}\quad z=\eta(x-ct),\ee
and
\be\lb{kbc-2}w=0\quad {\rm on}\quad z=-d.\ee
In the presence of surface tension, \x{kbc-3} is replaced by
\be\lb{kbc-4}P=P_{atm}-\sigma \frac{\eta_{xx}}{(1+\eta^2_x)^{3/2}}\quad {\rm on}\quad z=\eta(x-ct),\ee
where the constant $\sigma>0$ is the surface tension coefficient, and we assume that $\eta\in C^2(\R^2)$ in \x{kbc-4}.

The vorticity vector $\Upsilon$ is defined as the curl of the velocity field $\mathbf{u}=(u,v,w)$:
\be\lb{vorticity}\mathbf\Upsilon=(\Upsilon_1,\Upsilon_2,\Upsilon_3)=(w_y-v_z,u_z-w_x,v_x-u_y).\ee
In this paper, we assume that the vorticity vector is constant and satisfies 
\be\lb{no-0}\Upsilon_2+\hat{f}\neq0,\quad \Upsilon_3+f\neq0,\ee
which are reasonable since the magnitude of the equatorial undercurrent's relative vorticity
is much larger than that of the planetary vorticity (see the discussions in \cite{c-grl}).

\section{$f$-plane approximation}

In this Section, we consider the $f$-plane approximation, which corresponds to the governing equations \x{Euler-1-f}
with the conditions {\rm\x{massc}-\x{kbc-2}}. The main result of this Section reads as follows.

\begin{theorem}\lb{thm-4} Assume that the vorticity vector $\mathbf\Upsilon$ is
constant and  satisfies {\rm \x{no-0}}. Then there is no bounded solution to the
equations {\rm \x{Euler-1-f}} with {\rm\x{massc}-\x{kbc-2}}.
\end{theorem}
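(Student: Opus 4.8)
The plan is to pass to the travelling-wave frame, exploit the absence of $y$-dependence to read off the structure of the velocity field from the constant-vorticity hypothesis, and then turn the three momentum equations in \x{Euler-1-f} into equations for the gradient of $P$; the obstruction will come from the centripetal term $\frac{\hat{f}^2}{4}y$, which forces the pressure to grow quadratically in $y$. First I would record that the wave train depends on $x$ and $t$ only through $q=x-ct$ and is independent of $y$, so that $u_y=v_y=w_y=0$ and $\partial_t=-c\partial_x$. Feeding this into \x{vorticity} and using that $\mathbf\Upsilon$ is constant gives $v_x=\Upsilon_3$ and $v_z=-\Upsilon_1$, so $v$ is affine in $(x,z)$; meanwhile \x{massc} gives $u_x+w_z=0$, which together with $u_z-w_x=\Upsilon_2$ (constant) are the two relations that drive the rest of the argument, and incidentally make $u$ and $w$ harmonic in $(x,z)$.

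Next I would write $P_x$, $P_y$, $P_z$ from the three lines of \x{Euler-1-f} and impose equality of the mixed second derivatives. The key computation is $\partial_z P_x=\partial_x P_z$: after substituting the vorticity relations and $u_x+w_z=0$, every convective/nonlinear term cancels and the Coriolis terms combine through $u_x+w_z=0$, leaving only $-fv_z=0$, that is $f\Upsilon_1=0$; hence $\Upsilon_1=0$ because $f=2\Omega\sin\phi\neq0$ away from the Equator. The remaining two compatibility relations read $(\Upsilon_3+f)u_x-\Upsilon_1 w_x=0$ and $(\Upsilon_3+f)u_z-\Upsilon_1 w_z=0$; with $\Upsilon_1=0$ and the hypothesis $\Upsilon_3+f\neq0$ from \x{no-0} they force $u_x=u_z=0$, so $u$ is constant, and then \x{massc} with constancy of $\Upsilon_2$ makes $w$ a function of $q$ alone with $w_q=-\Upsilon_2$.

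Boundedness now does the trimming: $L$-periodicity in $x$ forces $\Upsilon_3=0$ in $v=\Upsilon_3 q+\text{const}$ and $\Upsilon_2=0$ in $w=-\Upsilon_2 q+\text{const}$, after which \x{kbc-2} gives $w\equiv0$ and \x{kbc-1} forces the free surface to be flat. Thus $u$, $v$, $w$ have been reduced to constants with $w\equiv0$. Substituting this into the $y$-momentum equation of \x{Euler-1-f} yields $-\frac{1}{\rho}P_y=fu+\frac{\hat{f}^2}{4}y+\frac{\hat{f}f}{4}R$, hence $P_{yy}=-\rho\hat{f}^2/4\neq0$ since $\hat{f}=2\Omega\cos\phi\neq0$. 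Therefore $P$ is a genuine quadratic in $y$ and is unbounded as $|y|\to\infty$, contradicting the existence of a bounded solution.

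It is worth noting that this final computation of $P_{yy}$ uses only $u_y=v_y=w_y=0$, so the centripetal term already rules out bounded solutions directly; the vorticity analysis above is what pins the velocity field down and shows precisely where the hypotheses \x{no-0} and the boundedness/periodicity are consumed (and it is also what will distinguish the $f$-plane from the $\beta$-plane, where replacing $f$ by $f+\beta y$ leaves room to kill the $y^2$ term by a specific constant $u$). I expect the main technical point to be the bookkeeping in the identity $\partial_z P_x=\partial_x P_z$ — verifying that the nonlinear and Coriolis terms genuinely cancel so that it collapses to $f\Upsilon_1=0$ — together with the conceptual point that the domain is unbounded in $y$, so that quadratic growth of $P$ is truly incompatible with boundedness.
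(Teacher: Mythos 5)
There is a genuine gap at the very first step: you assume that the velocity field itself has no $y$-dependence ($u_y=v_y=w_y=0$) and depends on $(x,t)$ only through $x-ct$, but neither is a hypothesis of the theorem. In this setting (as in Constantin's two-dimensionality results and Martin's papers that this work follows), only the surface wave train $\eta=\eta(x-ct)$ is assumed to have no $y$-variation; the velocity field and pressure are a priori functions of all of $(x,y,z,t)$, and extracting the structure of their $y$-dependence is precisely the content of such theorems. The paper's proof visibly retains this dependence: after deducing $w\equiv 0$ from the third component of the vorticity-transport equation $(\mathbf{\Upsilon}\cdot\nabla)\mathbf{u}+\hat{f}\,\partial_y\mathbf{u}+f\,\partial_z\mathbf{u}=0$ (this is where $\Upsilon_3+f\neq0$ and \x{kbc-2} act), it writes $u=\hat{u}(x,y,t)+\Upsilon_2 z$, $v=\hat{v}(x,y,t)-\Upsilon_1 z$, introduces a stream function $\psi(x,y,t)$ for $(\hat u,\hat v)$, shows that the Hessian of $\psi$ equals a constant matrix $(A,B,C)$ determined by $\mathbf{\Upsilon}$, $f$, $\hat f$, and only then invokes boundedness of $(\hat u,\hat v)$ to force $A=B=C=0$, from which \x{no-0} yields $\mathbf{\Upsilon}=0$. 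All of that work is bypassed by your opening assumption, so what you establish is the much weaker statement for flows already known to be independent of $y$. A symptom of this is that your argument never consumes the hypothesis $\Upsilon_2+\hat f\neq 0$, which the paper does need (to get $\Upsilon_2=0$ from $C=0$).

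Granting the extra assumption, your computations are correct: the mixed-partial identities for $P$ are exactly the components of the vorticity-transport equation specialized to $\partial_y\mathbf{u}=0$, and they do collapse to $f\Upsilon_1=0$ and $(\Upsilon_3+f)u_x-\Upsilon_1w_x=(\Upsilon_3+f)u_z-\Upsilon_1w_z=0$. One further remark on the endgame: your contradiction ("$P$ grows quadratically in $y$, hence is unbounded") requires reading "bounded solution" as including the pressure, whereas the paper's contradiction is assumption-free — evaluating $P=P_{atm}$ on $z=\eta(x-ct)$ for all $x,y,t$ forces the coefficient of $y^2$, namely $-\rho\hat f^{2}/8\neq0$, to vanish, which is impossible. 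To repair the proof you would need to reinstate the $y$-dependence of $(u,v,w)$ and run something like the paper's stream-function argument.
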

\begin{proof}
It is easy to verify that the constant vorticity vector $\mathbf\Upsilon$ satisfies the equation
\[(\mathbf{\Upsilon}\cdot\nabla)\mathbf{u}+\hat{f}(u_y,v_y,w_y)+f(u_z,v_z,w_z)=0,\]
which is equivalent to the following three equalities
\be\lb{v-f-1}\Upsilon_1 u_x+(\Upsilon_2+\hat{f})u_y+(\Upsilon_3+f)u_z=0,\ee
\be\lb{v-f-2}\Upsilon_1 v_x+(\Upsilon_2+\hat{f})v_y+(\Upsilon_3+f)v_z=0,\ee
\be\lb{v-f-3}\Upsilon_1 w_x+(\Upsilon_2+\hat{f})w_y+(\Upsilon_3+f)w_z=0.\ee
From \x{v-f-3}, we know that $w$ is constant in the direction of the vector $(\Upsilon_1, \Upsilon_2+\hat{f}, \Upsilon_3+f)$, which is not parallel to the flat bed $z=-d$ due to the condition $\Upsilon_3+f\neq0$. Using the kinematic boundary condition \x{kbc-2}, we obtain that $w=0$ throughout the fluid domain. Thus, we obtain from \x{vorticity} that
\[u_z=\Upsilon_2\quad\hbox{and}\quad v_z=-\Upsilon_1.\]
From the above relations, we can infer that there exist two functions $\hat{u}=\hat{u}(x,y,t)$, $\hat{v}=\hat{v}(x,y,t)$
such that
\be\lb{uu}u(x,y,z,t)=\hat{u}(x,y,t)+\Upsilon_2 z,\ee
\be\lb{vv}v(x,y,z,t)=\hat{v}(x,y,t)-\Upsilon_1 z,\ee
for all $x,y,z,t$ with $-d\leq z\leq\eta(x-ct)$. Due to \x{massc}, the functions $\hat{u}$ and
$\hat{v}$ satisfy the equation
\[\hat{u}_x+\hat{v}_y=0,\]
which admits us to choose a function $\psi=\psi(x,y,t)$ satisfying
\be\lb{str-f}\hat{u}=\psi_y\quad\hbox{and}\quad\hat{v}=-\psi_x.\ee
Consequently, from the equations \x{v-f-1}-\x{v-f-2}, we deduce that
\begin{equation}\lb{vorticity-stream}\left\{ \begin{array}{ll}
\Upsilon_1 \psi_{xy}+(\Upsilon_2+\hat{f})\psi_{yy}+(\Upsilon_3+f)\Upsilon_2=0, \\
\Upsilon_1 \psi_{xx}+(\Upsilon_2+\hat{f})\psi_{xy}+(\Upsilon_3+f)\Upsilon_1=0.
\end{array} \right.\end{equation}
We also obtain from the definition of $\Upsilon_3$ that
\be\lb{s-v-3}\psi_{xx}+\psi_{yy}=-\Upsilon_3.\ee
Using the relations \x{vorticity-stream} and \x{s-v-3}, we have
\[\psi_{xx}=\frac{\hat{\Upsilon}_2(f\Upsilon_2-\hat{f}\Upsilon_3)
  -\Upsilon_1^2\Upsilon_3-f\Upsilon_1^2}{\Upsilon^2_1+\hat{\Upsilon}^2_2}:=A,\]
\[\psi_{xy}=-\frac{\Upsilon_1(\Upsilon_2\hat{\Upsilon}_3+f\hat{\Upsilon}_2)}{\Upsilon^2_1+\hat{\Upsilon}^2_2}:=B,\]
\[\psi_{yy}=\frac{f\Upsilon^2_1-\hat{\Upsilon}_2\hat{\Upsilon}_3\Upsilon_2}{\Upsilon^2_1+\hat{\Upsilon}^2_2}:=C.\]
where
\[\hat{\Upsilon}_2=\Upsilon_2+\hat{f},\quad \hat{\Upsilon}_3=\Upsilon_3+f.\]
Therefore, there exist functions $d(t),e(t),g(t)$ such that
\[\psi(x,y,t)=\frac{1}{2}Ax^2+Bxy+\frac{1}{2}Cy^2+d(t)x+e(t)y+g(t).\]
By \x{str-f}, we find that
\[\hat{u}(x,y,t)=Bx+Cy+e(t),\]
\[\hat{v}(x,y,t)=-Ax-By-d(t).\]
Since the functions $\hat{u}$ and $\hat{v}$ are bounded, we can infer that
\[A=B=C=0.\]

Now, we claim that $\Upsilon_1=0$. On the contrary, we assume that $\Upsilon_1\neq0$. Since $B=0$, we conclude that
\[\Upsilon_2\hat{\Upsilon}_3+f\hat{\Upsilon}_2=0,\]
and thus
\[\hat{\Upsilon}_2(\Upsilon_2\hat{\Upsilon}_3+f\hat{\Upsilon}_2)=0.\]
Using the fact $C=0$, the above equation becomes
\[f(\Upsilon^2_1+\hat{\Upsilon}^2_2)=0,\]
which is impossible. Therefore  $\Upsilon_1=0$.

Since $C=0$, we can infer that $\hat{\Upsilon}_2\hat{\Upsilon}_3\Upsilon_2=0$, owing to \x{no-0}, we can conclude that
$\Upsilon_2=0$. Moreover,  we can obtain from $A=0$ that $\hat{f}\hat{\Upsilon}_2\Upsilon_3=0$. Because $\hat{\Upsilon}_2\neq0$ and $\hat{f}\not\equiv0$, we derive that $\Upsilon_3=0$.

From \x{uu} and \x{vv}, we obtain that
\[u(x,y,z,t)=\hat{u}(x,y,t)=e(t)\doteq u(t),\]
\[v(x,y,z,t)=\hat{v}(x,y,t)=-d(t)\doteq v(t),\]
which mean that $u,v$ are only dependent of $t$.
Moreover, from \x{Euler-1-f}, we obtain
\[\lb{PxPyPz}\left\{ \begin{array}{ll}
P_x=-\rho[u'(t)-fv(t)], \\
P_y=-\rho\Big[v'(t)+fu(t)+\frac{\hat{f}^2}{4}y+\frac{\hat{f}f}{4}R\Big],\\
P_z=\rho \Big[\hat{f} u(t)+\frac{\hat{f}^2}{4}R-g\Big].
\end{array} \right.\]
Therefore, the pressure can be given as
\begin{eqnarray*}
P(x,y,z,t)&=&-\rho[u'(t)-fv(t)]x-\rho\Big[\Big(v'(t)+fu(t)+\frac{\hat{f}f}{4}R\Big)y+\frac{\hat{f}^2}{8}y^2\Big]\\
&~&+\rho \Big[\hat{f} u(t)+\frac{\hat{f}^2}{4}R-g\Big]z+\bar{p}(t).
\end{eqnarray*}
Now the kinematic boundary condition \x{kbc-3} becomes
\begin{eqnarray*}\lb{kbc-becomes}P_{atm}&=&-\rho[u'(t)-fv(t)]x-\rho\Big[\Big(v'(t)+fu(t)+\frac{\hat{f}f}{4}R\Big)y
+\frac{\hat{f}^2}{8}y^2\Big]\\
&~&+\rho \Big[\hat{f} u(t)+\frac{\hat{f}^2}{4}R-g\Big]\eta(x-ct)+\bar{p}(t),\end{eqnarray*}
for all $x,y,t$. We infer from the above equation that the coefficient of $y$ must vanish, which is impossible.
Therefore, we conclude that there is no solution to the equations \x{Euler-1-f} with \x{massc}-\x{kbc-2}.
\end{proof}

\section{$\beta$-plane approximation}

In this section, we consider the $\beta$-plane approximation, which corresponds to the equations \x{Euler-1}-\x{massc} with the conditions \x{kbc-3}-\x{kbc-2}. Using \x{Euler-1} and \x{massc}, the vorticity equation becomes
\[\mathbf{\Upsilon}_t+(\mathbf{u}\cdot\nabla)\mathbf{\Upsilon}-\hat{f}(u_y,v_y,w_y)
-(f+\beta y)(u_z,v_z,w_z)+\beta(0,0,v)
=(\mathbf{\Upsilon}\cdot\nabla)\mathbf{u}.\]
For the constant vorticity, we can obtain
\[(\mathbf{\Upsilon}\cdot\nabla)\mathbf{u}+\hat{f}(u_y,v_y,w_y)+(f+\beta y)(u_z,v_z,w_z)-\beta(0,0,v)=0,\]
which is equivalent to the following equalities
\be\lb{vorticity-1}\Upsilon_1 u_x+(\Upsilon_2+\hat{f})u_y+(\Upsilon_3+f+\beta y)u_z=0,\ee
\be\lb{vorticity-2}\Upsilon_1 v_x+(\Upsilon_2+\hat{f})v_y+(\Upsilon_3+f+\beta y)v_z=0,\ee
\be\lb{vorticity-3}\Upsilon_1 w_x+(\Upsilon_2+\hat{f})w_y+(\Upsilon_3+f+\beta y)w_z-\beta v=0.\ee

\begin{theorem}\lb{lemma-1} There is no water flow exhibiting non-zero constant
vorticity vector and with a flat surface. Indeed, any flow with a flat surface and
constant vorticity vector must have the vanishing vorticity vector, that is $\mathbf{\Upsilon}=(0,0,0)$.
\end{theorem}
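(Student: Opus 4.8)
The plan is to exploit the flatness of the free surface to gain the extra boundary datum $w=0$ on $z=0$, which together with $w=0$ on $z=-d$ confines the vertical velocity between two horizontal level sets. Since the surface is flat, $\eta$ is constant, and the zero-mean normalization $\int_0^L\eta(s)\,ds=0$ forces $\eta\equiv0$, so the fluid occupies the slab $-d\le z\le0$. The kinematic condition \x{kbc-1} then reduces to $w=0$ on $z=0$, while \x{kbc-2} gives $w=0$ on $z=-d$. The core of the argument is to promote these two boundary identities to $w\equiv0$ and $v\equiv0$ throughout the fluid by following the characteristics of the first-order system \x{vorticity-1}--\x{vorticity-3}.

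First I would introduce the characteristic vector field
\[X=\Upsilon_1\partial_x+(\Upsilon_2+\hat f)\partial_y+(\Upsilon_3+f+\beta y)\partial_z,\]
and read \x{vorticity-1}--\x{vorticity-3} as transport equations along $X$: equations \x{vorticity-1} and \x{vorticity-2} say that $u$ and $v$ are constant along each characteristic, while \x{vorticity-3} reads $Xw=\beta v$. Because $v$ is constant along a characteristic, the right-hand side is constant there, so $w$ is an affine function of the curve parameter $s$. Solving the characteristic system $\dot x=\Upsilon_1$, $\dot y=\Upsilon_2+\hat f$, $\dot z=\Upsilon_3+f+\beta y$ shows that $y$ is affine in $s$, whence $z$ is a genuine quadratic in $s$ with leading coefficient $\frac{1}{2}\beta(\Upsilon_2+\hat f)$, nonzero by \x{no-0} together with $\beta\neq0$. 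Thus each characteristic through an interior point is a parabola in the $z$-coordinate and must leave the slab $-d\le z\le0$ in both directions of $s$, meeting the planes $z=0$ or $z=-d$ where $w$ vanishes. An affine function with two distinct zeros vanishes identically, so along the characteristic $w\equiv0$ and its slope $\beta v$ is zero; since $\beta\neq0$ this gives $v\equiv0$ there. As every point of the slab lies on such a characteristic, I would conclude $w\equiv0$ and $v\equiv0$ throughout the fluid.

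With $v\equiv w\equiv0$, the definition \x{vorticity} gives at once $\Upsilon_1=w_y-v_z=0$, $u_z=\Upsilon_2$ and $u_y=-\Upsilon_3$, while the incompressibility \x{massc} reduces to $u_x=0$. Integrating these relations yields
\[u(x,y,z,t)=-\Upsilon_3\,y+\Upsilon_2\,z+h(t)\]
for some function $h$ of $t$ alone. Substituting this form into \x{vorticity-1}, now with $\Upsilon_1=0$, I would obtain $-\hat f\Upsilon_3+f\Upsilon_2+\beta\Upsilon_2\,y=0$ for all $y$; matching the coefficient of $y$ forces $\Upsilon_2=0$ because $\beta\neq0$, and the constant term then forces $\hat f\Upsilon_3=0$, hence $\Upsilon_3=0$ since $\hat f\neq0$. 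Collecting $\Upsilon_1=\Upsilon_2=\Upsilon_3=0$ would establish $\mathbf\Upsilon=(0,0,0)$.

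The main obstacle is the middle step. In the $f$-plane setting one had $\beta=0$, so \x{vorticity-3} made $w$ constant along straight characteristics and a single boundary value sufficed; here the coupling $Xw=\beta v$ must be handled simultaneously with the transport of $v$. The device that resolves this is the observation that the $\beta y$ term bends the characteristics into parabolas in $z$, guaranteeing that each arc meets the horizontal boundaries twice, so that the two conditions $w=0$ on $z=0$ and on $z=-d$ can be played against the affine profile of $w$ to annihilate $w$ and $v$ simultaneously. I would take care to check that the relevant arc of each characteristic remains inside the closed slab up to its first exit, so that the transport identities stay valid along it.
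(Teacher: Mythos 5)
Your proof is correct, but it follows a genuinely different route from the paper's. The paper never touches characteristics: it first observes that constant vorticity plus incompressibility makes $u,v,w$ harmonic, then applies the Laplacian to the transport identities (e.g.\ $\Delta(yu_z)=2u_{zy}=0$) and differentiates \x{vorticity-1}--\x{vorticity-3} repeatedly to annihilate the second derivatives one at a time, using only the bottom condition $w=0$ on $z=-d$ and the hypothesis $\Upsilon_2+\hat f\neq0$; notably the flat free surface is never used, which is why that proof transfers verbatim to the general-surface situation exploited in Theorem \ref{thm-2}. You instead read \x{vorticity-1}--\x{vorticity-3} as a transport system along $X=\Upsilon_1\partial_x+(\Upsilon_2+\hat f)\partial_y+(\Upsilon_3+f+\beta y)\partial_z$ and use the geometric observation that the $\beta y$ term bends each characteristic into a parabola in $z$ with leading coefficient $\tfrac12\beta(\Upsilon_2+\hat f)\neq0$, so it meets the two planes $z=0$ and $z=-d$ (where $w$ vanishes, the top datum coming precisely from the flatness of the surface via \x{kbc-1}); since $w$ is affine in the characteristic parameter with slope $\beta v$, two zeros kill both $w$ and $v$ at once, and the remaining algebra with $u=-\Upsilon_3 y+\Upsilon_2 z+h(t)$ in \x{vorticity-1} correctly yields $\Upsilon_2=0$ from the coefficient of $y$ and then $\Upsilon_3=0$ from $\hat f\neq0$. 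Your argument is shorter and more transparent, and it isolates exactly where $\beta\neq0$ and $\Upsilon_2+\hat f\neq0$ enter; the trade-offs are that it genuinely needs the flat surface (so, unlike the paper's proof, it cannot be quoted later in the proof of Theorem \ref{thm-2} before the surface is known to be flat), and that the characteristics must be allowed to run arbitrarily far in $y$ before exiting the slab (since $\beta$ is small), so you are implicitly using that the fluid domain is the full horizontal slab --- the same implicit assumption the paper makes when it lets $y$ range over all reals. Do make explicit, as you indicate, that the arc between the two first-exit parameters stays in the closed slab so the transport identities apply along it; with that, the proof is complete.
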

\begin{proof}
We can obtain from \x{vorticity} and the equation  \x{massc} that
\[\Delta w=w_{xx}+w_{yy}+w_{zz}=u_{zx}+v_{zy}+w_{zz}=(u_{x}+v_{y}+w_{z})_z=0.\]
Analogously,
\[\Delta u=\Delta v=0.\]
 Therefore, the velocity components $u,v,w$ are harmonic function within the
fluid domain. Moreover, it is obvious that all partial derivatives of $u,v,w$ are harmonic functions.
Then it follows from  \x{vorticity-1}  that
\[\Delta(y u_z)=0,\]
 which can be written as
\[y\Delta(u_z)+2u_{z y}=0,\]
from which we obtain that
\[u_{zy}=0.\]
Similarly, using the equations \x{vorticity-2} and \x{vorticity-3}, we can infer that
\[v_{zy}=0\quad\hbox{and}\quad w_{zy}=0.\]
Then, by the definitions of $\Upsilon_1$ and $\Upsilon_2$, we have
\[w_{yy}=v_{zy}=0\quad\hbox{and}\quad w_{xy}=u_{zy}=0.\]
Using the above relations,
we conclude that $w_y=f(t)$ for some function $f$, combined with the kinematic boundary condition \x{kbc-2}, we have
\[w_y=0\quad {\rm on}\quad z=-d.\]
Therefore, we conclude that
\[w_y\equiv0,\]
which implies that $v_z=-\Upsilon_1$.

Differentiating with respect to $y$ in \x{vorticity-3}, we obtain
\[\Upsilon_1 w_{xy}+(\Upsilon_2+\hat{f})w_{yy}+(\Upsilon_3+f+\beta y)w_{zy}+\beta w_z-\beta v_y=0.\]
Since $w_y\equiv0$, we can infer that
\be\lb{wzvy}w_z=v_y,\ee from which we have
\be\lb{wzz-vyz}w_{zz}=v_{yz}=(-\Upsilon_1)_y=0,\ee and
\[v_{yy}=w_{zy}=0.\]
Moreover, due to $v_{zz}=(-\Upsilon_1)_z\equiv0$ and $\Delta v=\Delta w=0$, we conclude that
\[v_{xx}\equiv0\quad\hbox{and}\quad w_{xx}\equiv0.\]
Differentiating with respect to $z$ in the equation of mass conservation \x{massc} we get
\[u_{xz}+v_{yz}+w_{zz}=0.\]
Due to \x{wzz-vyz}, we get
\[u_{xz}=0.\]
Analogously, differentiating with respect to $y$ in \x{massc}, we can obtain that
\[u_{xy}=0.\]
Now, let us differentiate with respect to $z$ in the equation \x{vorticity-1}, we have
\[\Upsilon_1 u_{xz}+(\Upsilon_2+\hat{f})u_{yz}+(\Upsilon_3+f+\beta y)u_{zz}=0,\]
which becomes
\[(\Upsilon_3+f+\beta y)u_{zz}=0,\]
since $u_{xz}=u_{yz}=0$.
Thus $u_{zz}(x,y,z)=0$ for all $(x,y,z)$ with $y\neq-\frac{\Upsilon_3+f}{\beta}$. From the continuity of $u_{zz}$, we have
\[u_{zz}=0\quad{\rm within\; the \; fluid\;domain}. \]
Recalling that $\Upsilon_2=u_z-w_x$, it is easy to obtain that
\[w_{xz}=0.\]
By the fact that $w_{xx}=w_{xy}=0$, we conclude that $w_x=a(t)$ for some function $a$. Since $w_x=0$ on the flat bed $z=-d$, we obtain
\[w_x=0\quad{\rm within\; the \; fluid\;domain},\]
which implies that $w_{zx}=0$ and $\Upsilon_2=u_z$ . Moreover, by the fact $w_{zy}=w_{zz}=0$,
we conclude that
\[w_z~{\rm is ~constant \;within\; the \; fluid\;domain}.\]
Differentiating the equation \x{vorticity-2} with respect to $x$, we have
\[\Upsilon_1 v_{xx}+(\Upsilon_2+\hat{f})v_{yx}+(\Upsilon_3+f+\beta y)v_{zx}=0.\]
Since $v_{zx}=(-\Upsilon_1)_x=0$, $v_{xx}=0$, we get
\[(\Upsilon_2+\hat{f})v_{yx}=0.\]
Using the assumption $\Upsilon_2+\hat{f}\neq0$, we deduce that
\[v_{xy}=0\quad{\rm within\; the \; fluid},\]
which, by the expression of $\Upsilon_3=v_x-u_y$, implies that
\[u_{yy}=0\quad{\rm within\; the \; fluid}.\]

Using the previous relations $u_{xy}=u_{yy}=v_{xy}=v_{yy}=0$, we can obtain from the equations \x{vorticity-1} and \x{vorticity-2} that
\be\lb{uzvz}u_{z}=v_z=0\quad{\rm throughout\; the \; flow},\ee
which yields that
\[\Upsilon_1=\Upsilon_2=0.\] Thus, equations \x{vorticity-1} and \x{vorticity-2} become
\[\hat{f} u_y=0,\quad\hbox{and}\quad\hat{f} v_y=0,\]
which allow us to conclude that
\be\lb{uyvy}u_y=v_y=0.\ee
Notice that \x{wzvy} holds, so we have
\be\lb{w-z}w_z=0\quad \mbox{within~the~fluid}.\ee
Due to $\Upsilon_1=\Upsilon_2=0$ and $w_x=w_y=0$, the equation \x{vorticity-3} can be simplified to
\[(\Upsilon_3+f+\beta y)w_z=\beta v.\]
By \x{w-z}, we obtain
\[v=0\quad{\rm within\; the \; fluid}.\]
Therefore, combined with \x{uyvy} we have
\[\Upsilon_3=v_x-u_y=0.\] Now the proof is finished.
\end{proof}

\begin{theorem}\lb{thm-2} Assume that the vorticity vector $\mathbf\Upsilon$ is constant and satisfies $\Upsilon_2+\hat{f}\neq0$. Then the only bounded solution to the equations {\rm\x{Euler-1}-\x{massc}} with the conditions {\rm\x{kbc-3}-\x{kbc-2}} is the one with flat surface, velocity field and the pressure given as
\[(u,v,w)=(-\frac{\hat{f}^2}{4\beta},0,0),\]
\be\lb{pxyz}P(x,y,z,t)=\rho \Big[-\frac{\hat{f}^3}{4\beta}+\frac{\hat{f}^2}{4}R-g\Big](z-\eta_0)+P_{atm},\ee
where $\eta_0$ is a constant.
\end{theorem}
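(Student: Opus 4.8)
The plan is to collapse the velocity field to a trivial one and then read off both the free surface and the pressure from the dynamic boundary condition \x{kbc-3}. The key observation is that the entire chain of identities in the proof of Theorem~\x{lemma-1}---from the harmonicity of $u,v,w$, through the vanishing of all the relevant mixed second derivatives, down to $\mathbf{\Upsilon}=(0,0,0)$---uses only the incompressibility \x{massc}, the constancy of the vorticity, the flat-bed condition \x{kbc-2}, and the hypothesis $\Upsilon_2+\hat{f}\neq0$, and never the shape of the free surface. I would therefore invoke precisely that computation to obtain $\mathbf{\Upsilon}=(0,0,0)$, $v\equiv0$, $w\equiv0$, together with $u_x=u_y=u_z=0$; by \x{massc} the last of these reduces to $u=u(t)$, so the reduced field is $(u,v,w)=(u(t),0,0)$ (boundedness is a standing hypothesis here but, as in Theorem~\x{lemma-1}, is not what forces the reduction).

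With $(u,v,w)=(u(t),0,0)$, substituting into \x{Euler-1} turns the three momentum equations into explicit expressions for the pressure gradient,
\[
P_x=-\rho\,u'(t),\qquad
P_y=-\rho\Big[(f+\beta y)u(t)+\tfrac{\hat{f}^2}{4}y+\tfrac{\hat{f}f}{4}R\Big],\qquad
P_z=\rho\Big[\hat{f}u(t)+\tfrac{\hat{f}^2}{4}R-g\Big].
\]
Integrating these consistently yields, up to a purely time-dependent term $\bar{p}(t)$,
\[
P=-\rho u'(t)x-\rho\Big[\big(fu(t)+\tfrac{\hat{f}f}{4}R\big)y+\big(\tfrac{\beta u(t)}{2}+\tfrac{\hat{f}^2}{8}\big)y^2\Big]+\rho\Big[\hat{f}u(t)+\tfrac{\hat{f}^2}{4}R-g\Big]z+\bar{p}(t).
\]

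The decisive step is to impose \x{kbc-3}, i.e. to set $P=P_{atm}$ on $z=\eta(x-ct)$ and demand that the resulting identity hold for all $x,y,t$. Since $\eta$ does not depend on $y$, the coefficient of $y^2$ must vanish, which forces $\tfrac{\beta u(t)}{2}+\tfrac{\hat{f}^2}{8}=0$, i.e. $u(t)\equiv-\tfrac{\hat{f}^2}{4\beta}$; in particular $u$ is constant, so the term $-\rho u'(t)x$ disappears. Here the identity $\beta=\hat{f}/R$ does the real work: it gives $u=-\tfrac{\hat{f}R}{4}$, whence the linear coefficient $fu+\tfrac{\hat{f}f}{4}R=f\big(u+\tfrac{\hat{f}R}{4}\big)$ vanishes automatically, and it collapses the vertical coefficient to $\hat{f}u+\tfrac{\hat{f}^2}{4}R-g=-g$. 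The boundary condition then reduces to $P_{atm}=-\rho g\,\eta(x-ct)+\bar{p}(t)$; as the left side and $\bar{p}(t)$ are independent of $x$ while $\eta$ depends on $x-ct$, the profile $\eta$ must be constant, say $\eta\equiv\eta_0$, so the surface is flat. Back-substitution gives $\bar{p}(t)=P_{atm}+\rho g\eta_0$ and hence the pressure \x{pxyz}, and the remaining condition \x{kbc-1} is satisfied trivially since $w\equiv0$ and $\eta_x\equiv0$.

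The main obstacle is not conceptual but lies in the bookkeeping of the coefficient matching: one must carry the full quadratic-in-$y$ pressure and recognize that the vanishing of the $y^2$-, $y$- and $x$-coefficients, together with the collapse $\hat{f}u+\tfrac{\hat{f}^2}{4}R-g=-g$, are all consequences of the single choice $u=-\tfrac{\hat{f}^2}{4\beta}$ combined with $\beta=\hat{f}/R$. The one point that genuinely must be checked is that the surviving vertical coefficient is nonzero---it equals $-\rho g\neq0$---since otherwise the step forcing $\eta$ to be constant would fail.
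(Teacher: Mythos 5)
Your proposal is correct and follows essentially the same route as the paper: reuse the computations from the proof of Theorem \ref{lemma-1} (which indeed never invoke the surface shape) to reduce the velocity field to $(b(t),0,0)$, integrate the Euler equations \x{Euler-1} for the pressure, and then match coefficients in the dynamic boundary condition \x{kbc-3} to force $b\equiv-\hat{f}^2/(4\beta)$ and a flat surface. Your explicit verification that the surviving vertical coefficient collapses to $-\rho g\neq0$ (via $\beta=\hat{f}/R$) is a small but genuine improvement, since the paper's final step, forcing $\eta$ to be constant, tacitly requires that this coefficient not vanish.
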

\begin{proof}
From the proof of Theorem \ref{lemma-1}, we can deduce that
\[w=0\quad{\rm within\; the \; fluid\;domain},\]
since $w_z=0$ and $w=0$ on the flat bed $z=-d$. In addition, we have shown that $v=0$. Thus we only need to find the horizontally velocity $u$ for the velocity field.

From the equation \x{massc} and the fact $v_y=w_z=0$,
we obtain that $u_x=0$. Going back to \x{uzvz} and \x{uyvy}, we conclude that
\[u(x,y,z,t)=b(t)\]
for some function $b$.

Note that $(u,v,w)=(b(t), 0 , 0)$, the Euler equations \x{Euler-1} become
\begin{equation}\begin{split}\lb{Euler-P}\left\{ \begin{array}{ll}
P_x=-\rho b'(t), \\
P_y=-\rho \Big[(f+\beta y)b(t)+\frac{\hat{f}^2}{4}y+\frac{\hat{f}f}{4}R\Big],\\
P_z=\rho \Big[\hat{f} b(t)+\frac{\hat{f}^2}{4} R-g\Big].
\end{array} \right.\end{split}\end{equation}
Therefore, the pressure can be given as
\[P(x,y,z,t)=-\rho b'(t)x-\rho\Big[fb(t)y+\frac{ \beta b(t)}{2}y^2+\frac{\hat{f}^2}{8}y^2+\frac{\hat{f}f}{4}Ry\Big]
+\rho \Big[\hat{f} b(t)+\frac{\hat{f}^2}{4} R-g\Big]z+p(t).\]
Now the kinematic boundary condition \x{kbc-3} becomes
\bea\lb{P-z-condition}&& P_{atm}+\rho\Big[fb(t)y+\frac{ \beta b(t)}{2}y^2+\frac{\hat{f}^2}{8}y^2+\frac{\hat{f}f}{4}Ry\Big]\nn\\
\EQ-\rho b'(t)x+\rho \Big[\hat{f} b(t)+\frac{\hat{f}^2}{4} R-g\Big]\eta(x-ct)+p(t),\eea
for all $x,y,t$. We infer from the above equation that the coefficient of $y$ must vanish, which means that
$b(t)=-\frac{\hat{f}^2}{4\beta}$. Now the equality \x{P-z-condition} simplifies to
\[P_{atm}=\rho \Big[-\frac{\hat{f}^3}{4\beta}+\frac{\hat{f}^2}{4} R-g\Big]\eta(x-ct)+p(t)\quad\hbox{for\;all}\quad x, t,\]
which is only possible if both functions $p$ and $\eta$ are constants $p_0$, $\eta_0$.
Thus the pressure function can be given as the form  \x{pxyz}.
\end{proof}

\begin{remark}\lb{r1}The above result is also true for the case that the fluid domain bounded below by the flat bed $z=-d$ and above by the free surface $z=\eta(x,y,t)$ {\rm(}not the wave trains{\rm)}. In fact, the velocity field, the pressure and the free surface given by
\begin{equation*}\begin{split}\lb{solution}\left\{ \begin{array}{ll}
(u,v,w)=(-\frac{\hat{f}^2}{4\beta},0,0), \\
P(x,y,z,t)=\rho \Big[-\frac{\hat{f}^3}{4\beta}+\frac{\hat{f}^2}{4} R-g\Big](z-\tilde{\eta}_0)+P_{atm},\\
\eta(x,y,t)=\tilde{\eta}_0,
\end{array} \right.\end{split}\end{equation*}
{\rm(}where $\tilde{\eta}_0$ is a constant{\rm)} is the only solution satisfying the equations {\rm\x{Euler-1}}-{\rm\x{massc}} with the boundary conditions
\[P=P_{atm}\quad{\rm on}\quad z=\eta(x,y,t),\]
\[w=\eta_t+u\eta_x+v\eta_y\quad{\rm on}\quad z=\eta(x,y,t),\]
\[w=0\quad{\rm on}\quad z=-d.\]
\end{remark}

The following Remark presents much difference between our results and the results for the flows without centripetal effects and in the $\beta$-plane
approximation.
\begin{remark}\lb{r2}The tuple $(u, v, w, P, \eta)$ representing the velocity field, the pressure and the free surface given by
\begin{equation*}\begin{split}\lb{solution}\left\{ \begin{array}{ll}
(u,v,w)=(0,0,0), \\
P(x,y,z,t)=-\rho g(z-\bar{\eta}_0)+P_{atm},\\
\eta(x,y,t)=\bar{\eta}_0,
\end{array} \right.\end{split}\end{equation*}
{\rm(}where $\bar{\eta}_0$ is a constant{\rm)} is the only flow which satisfies the Euler equations {\rm(}without the centripetal forces{\rm)}
\begin{equation*}\lb{Euler-r2}\left\{ \begin{array}{ll}
u_t+uu_x+vu_y+wu_z+\hat{f} w-(f+\beta y)v=-\frac{1}{\rho}P_x, \\
v_t+uv_x+vv_y+wv_z +(f+\beta y)u=-\frac{1}{\rho}P_y,\\
w_t+uw_x+vw_y+ww_z-\hat{f} u =-\frac{1}{\rho}P_z-g,
\end{array} \right.\end{equation*}
together with the conditions {\rm\x{massc}} and {\rm \x{kbc-3}-\x{kbc-2}}.
\end{remark}

Finally in this section, we will prove a result for  capillary-gravity waves, which correspond to the equations \x{Euler-1}-\x{massc} and the boundary conditions \x{kbc-1}-\x{kbc-4}.

\begin{theorem}\lb{thm-3} Assume that the vorticity vector is constant and also $\Upsilon_2+\hat{f}\neq0$. Then there is no bounded solution to the
equations {\rm\x{Euler-1}-\x{massc}} with {\rm\x{kbc-1}-\x{kbc-4}}.
\end{theorem}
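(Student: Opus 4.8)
The plan is to recycle the velocity reduction that underlies Theorems \ref{lemma-1} and \ref{thm-2}, and to let the capillary condition \x{kbc-4} play the role that \x{kbc-3} played in Theorem \ref{thm-2}. First I would observe that every conclusion drawn in the proof of Theorem \ref{lemma-1}---harmonicity of $u,v,w$, the vanishing of the various mixed second derivatives, and finally $\Upsilon_1=\Upsilon_2=\Upsilon_3=0$, $v\equiv0$ and $w_z\equiv0$---rests only on the governing equations \x{Euler-1}, the incompressibility \x{massc}, and the bed condition \x{kbc-2}. Surface tension enters nowhere in that chain, so the whole reduction survives verbatim. Combined with $w=0$ on $z=-d$ it yields $w\equiv0$, and then, exactly as in Theorem \ref{thm-2}, \x{massc} together with $u_y=u_z=0$ forces $u_x=0$, so the velocity field is $(u,v,w)=(b(t),0,0)$ for some function $b$.

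Substituting $(b(t),0,0)$ into \x{Euler-1} produces the same pressure as in Theorem \ref{thm-2},
\[
P(x,y,z,t)=-\rho b'(t)x-\rho\Big[fb(t)y+\frac{\beta b(t)}{2}y^2+\frac{\hat{f}^2}{8}y^2+\frac{\hat{f}f}{4}Ry\Big]+\rho\Big[\hat{f}b(t)+\frac{\hat{f}^2}{4}R-g\Big]z+p(t).
\]
I would then impose \x{kbc-4} on $z=\eta(x-ct)$. The curvature term $-\sigma\eta_{xx}(1+\eta_x^2)^{-3/2}$ is independent of $y$, just as the constant $P_{atm}$ was in Theorem \ref{thm-2}, so matching powers of $y$ proceeds identically: the coefficient of $y^2$ forces $b\equiv-\hat{f}^2/(4\beta)$ (hence $b'\equiv0$), while the coefficient of $y$ vanishes automatically because $\beta=\hat{f}/R$. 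Using $\hat{f}/\beta=R$ one checks that the surviving vertical coefficient collapses to $\rho[\hat{f}b+\frac{\hat{f}^2}{4}R-g]=-\rho g$. After noting that $p(t)\equiv p_0$ must be constant and $\eta=\eta(\xi)$ with $\xi=x-ct$, what remains of \x{kbc-4} is the capillary--gravity profile equation
\be\lb{capillary-ode}\sigma\frac{\eta''}{(1+\eta'^2)^{3/2}}=C+\rho g\,\eta,\qquad C:=P_{atm}-p_0.\ee

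This is the point at which surface tension genuinely changes the problem, since \x{capillary-ode} is a second-order nonlinear ODE rather than the algebraic relation of Theorem \ref{thm-2}, and it is the step I expect to be the main obstacle: one must control the nonlinear curvature operator without linearizing it. The device that makes this routine is to test \x{capillary-ode} only at the extrema of the periodic $C^2$ profile, where $\eta'=0$ so the left-hand side reduces to $\eta''$ with sign fixed by the second-derivative test. At the crest $\xi=0$ the profile is maximal, so $\eta''(0)\le0$ and \x{capillary-ode} gives $C+\rho g\,\eta_{\max}\le0$; at a trough $\eta''\ge0$ and \x{capillary-ode} gives $C+\rho g\,\eta_{\min}\ge0$. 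Since $\eta_{\max}\ge\eta_{\min}$, both inequalities can hold simultaneously only when $\eta_{\max}=\eta_{\min}$, i.e. $\eta$ is constant.

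Finally, a constant profile with zero mean $\int_0^L\eta(s)\,ds=0$ must vanish identically, which contradicts the standing assumption $\eta(0)>0$ that the crest lies strictly above the mean level. Hence the equations \x{Euler-1}--\x{massc} with \x{kbc-1}--\x{kbc-4} admit no bounded solution. I would stress that the extremum comparison forces flatness for \emph{every} value of $C$, so there is no need to first pin down $C$ (for instance by integrating \x{capillary-ode} over a period, which also gives $C=0$); the genuine-wave normalization alone closes the argument.
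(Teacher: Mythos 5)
Your proposal is correct, and up to the derivation of the profile equation it follows the paper's proof exactly: the paper likewise observes that the reduction to $(u,v,w)=(b(t),0,0)$ and the pressure formula from Theorem \ref{thm-2} survive unchanged (since the proof of Theorem \ref{lemma-1} never invokes the surface condition), imposes \x{kbc-4}, and kills the $y^2$- and $y$-coefficients to get $b\equiv-\hat{f}^2/(4\beta)$ and the relation \x{P2-thm3}. The only divergence is the endgame. The paper integrates \x{P2-thm3} over a period, using the periodicity of $\eta_x/\sqrt{1+\eta_x^2}$ and the zero mean of $\eta$, to conclude $p(t)\equiv P_{atm}$ (your constant $C=0$), and then derives a contradiction from $\eta(0)>0$: near the crest the right-hand side of \x{P3-thm3} is positive, so $\eta_{xx}>0$ there, which is incompatible with a maximum. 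You instead compare the signs of $\eta''$ at the crest and at a trough of the periodic $C^2$ profile, which forces $\eta_{\max}\le -C/(\rho g)\le \eta_{\min}$ and hence $\eta$ constant for \emph{any} value of $C$, after which the zero-mean normalization and $\eta(0)>0$ give the contradiction. Both closings are valid and elementary; yours trades the integration step for the existence of a trough (automatic for a periodic profile) and is marginally more robust in that it does not need to evaluate $C$ first.
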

\begin{proof}
Note that \x{Euler-P} in Theorem \ref{thm-2} still holds here. Thus, the pressure should be given as
\beaa P(x,y,z,t)\EQ-\rho b'(t)x-\rho\Big[fb(t)y+\frac{ \beta b(t)}{2}y^2+\frac{\hat{f}^2}{8}y^2+\frac{\hat{f}f}{4}Ry\Big]\\
&+&\rho \Big[\hat{f} b(t)+\frac{\hat{f}^2}{4} R-g\Big]z+p(t).\eeaa
Using the condition \x{kbc-4}, we obtain that
\beaa P_{atm}-p(t)
&=&-\rho b'(t)x-\rho\Big[fb(t)y+\frac{ \beta b(t)}{2}y^2+\frac{\hat{f}^2}{8}y^2+\frac{\hat{f}f}{4}Ry\Big]\\
&+&\rho \Big[\hat{f} b(t)+\frac{\hat{f}^2}{4} R-g\Big]\eta(x-ct)+\sigma\frac{\eta_{xx}}{(1+\eta^2_x)^{3/2}}.\eeaa
Therefore, we conclude that
\[\frac{\beta b(t)}{2}+\frac{\hat{f}^2}{8}=0,\quad fb(t)+\frac{\hat{f}f}{4}R=0.\] Then
\[b(t)=-\hat{f}^2/(4 \beta)\]
and
\be\lb{P2-thm3}P_{atm}-p(t)
=\rho \Big[-\frac{\hat{f}^3}{4 \beta}+\frac{\hat{f}^2}{4} R-g\Big]\eta(x-ct)+\sigma\frac{\eta_{xx}}{(1+\eta^2_x)^{3/2}}\ee
for all $x,t$. Notice that the function
\[x\rightarrow \frac{\eta_{x}(x)}{\sqrt{1+\eta^2_x(x)}}\]
is periodic and $\int_{0}^{L}\eta(s)ds=0$, then we obtain upon integration from $0$ to $L$ in \x{P2-thm3} that
\be\lb{P3-thm3}\rho \Big[\frac{\hat{f}^3}{4 \beta}-\frac{\hat{f}^2}{4} R+g\Big]\eta(x)=\sigma\frac{\eta_{xx}}{(1+\eta^2_x)^{3/2}}\quad\hbox{for\;all}\;x.\ee
Due to \[\frac{\hat{f}^3}{4 \beta}-\frac{\hat{f}^2}{4} R+g>0\] and $\eta(0)>0$ implies (due to the continuity of $\eta$) that $\eta(x)>0$ in a neighborhood $B_\epsilon(0)$ of $x=0$, we deduce from \x{P3-thm3} that $\eta_{xx}>0$ in $B_\epsilon(0)$, which yields
that the function $\eta$ is convex in $B_\epsilon(0)$, this contradicts the maximality of $\eta$ at the crest.
Therefore, there is no bounded solutions to   {\rm\x{Euler-1}-\x{massc}} with {\rm\x{kbc-1}-\x{kbc-4}}.
\end{proof}

\bibliographystyle{amsplain}

\end{document}